\newtheorem{thm}{Theorem}[section]
\newtheorem{cor}[thm]{Corollary}
\newtheorem{lem}[thm]{Lemma}
\theoremstyle{remark}
\newtheorem{rem}[thm]{Remark}
\theoremstyle{definition}
\newtheorem{dfn}[thm]{Definition}
\numberwithin{equation}{section}
\numberwithin{thm}{section}
\theoremstyle{plain}
\newtheorem*{clm}{Claim}
\begin{document}

\subjclass{Primary 57M50. Secondary 57N10}

\title{Thin position for incompressible surfaces in 3-manifolds}

\author{Kazuhiro Ichihara}
\address{Department of Mathematics, College of Humanities and Sciences, Nihon University, 3-25-40 Sakurajosui, Setagaya-ku, Tokyo 156-8550, Japan.}
\email{ichihara@math.chs.nihon-u.ac.jp}

\author{Makoto Ozawa}
\address{Department of Natural Sciences, Faculty of Arts and Sciences, Komazawa University, 1-23-1 Komazawa, Setagaya-ku, Tokyo, 154-8525, Japan.}
\email{w3c@komazawa-u.ac.jp}

\author{J. Hyam Rubinstein}
\address{Department of Mathematics and Statistics, The University of Melbourne, VIC 3010, Australia.}
\email{rubin@ms.unimelb.edu.au}

\thanks{The first author and the second author are partially supported by Grant-in-Aids for Scientific Research (C) (No. 26400100 and 26400097), The Ministry of Education, Culture, Sports, Science and Technology, Japan, respectively. The third author is partially supported under the Australian Research Council's Discovery funding scheme (project number DP130103694).}

\begin{abstract}
In this paper, we give an algorithm to build all compact orientable atoroidal Haken 3-manifolds with tori boundary or closed orientable Haken 3-manifolds, so that in both cases, there are embedded closed orientable separating incompressible surfaces which are not tori.
Next, such incompressible surfaces are related to Heegaard splittings. For simplicity, we focus on the case of separating incompressible surfaces, since non-separating ones have been extensively studied. 
After putting the surfaces into Morse position relative to the height function associated to the Heegaard splittings, 
a thin position method is applied so that levels are thin or thick, depending on the side of the surface. 
The complete description of the surface in terms of these thin/thick levels gives a hierarchy. 
Also this thin/thick description can be related to properties of the curve complex for the Heegaard surface. 
\end{abstract}

\maketitle

\section{Introduction}

We give an algorithm to build all compact orientable atoroidal Haken 3-manifolds with tori boundary and closed orientable Haken 3-manifolds, so that in both cases, there are embedded closed orientable incompressible surfaces which are not tori. 
The algorithm can also be viewed as a decomposition result showing how such manifolds can be built from handlebodies and compression bodies with {\it suitable} boundary patterns. 
Given such a description, many properties of the 3-manifolds can be deduced, such as estimates on the Heegaard genus, annularity properties of incompressible surfaces etc. 
The original notion of Haken manifold was introduced by Haken in \cite{Ha}. 

Next, we relate such incompressible surfaces to Heegaard splittings. 
For separating incompressible surfaces, the first step is to put the surfaces into Morse position relative to the height function associated to the Heegaard splittings. 
Then a thin position method is applied so that levels are thin or thick, depending on the side of the surface. 
The thin levels can be viewed as incompressible spanning surfaces, i.e., part of the hierarchy described in the earlier sections. 
So the complete description of the surface in terms of these thin/thick levels gives a hierarchy. 
The special case of Heegaard genus 2 is treated as an illustration of the general theory. 

Finally we relate this thin/thick description to properties of the curve complex for the Heegaard surface. 

\section{Preliminaries}

We give a very brief list of basic definitions and concepts. For more details of 3-manifold topology, see \cite{He} and on thin position see \cite{Sc}. We work throughout in the PL category and all manifolds, surfaces and maps are PL. 

\begin{dfn}

A compact orientable 3-manifold is \textit{irreducible} if every embedded 2-sphere bounds a 3-ball.

\end{dfn}

\begin{dfn}
A closed embedded orientable surface $S$ in a compact orientable 3-manifold is \textit{incompressible} if $S$ is not a 2-sphere and the homomorphism induced by inclusion $\pi_1(S) \to \pi_1(M)$ is one-to-one. 

\end{dfn}

\begin{dfn}
A compact orientable irreducible 3-manifold $M$ is called \textit{atoroidal} if the only embedded incompressible torus is parallel to a component of $\partial M$. 

\end{dfn}

\begin{dfn}

A \textit{handlebody} is a compact orientable 3-manifold with a single boundary component, which has an embedded graph (spine) which is a homotopy retract. 

\end{dfn}

\begin{dfn}

A \textit{compression body} is obtained by attaching 2-handles to the boundary surface $S \times \{0\}$ of a product $S \times [0,1]$, where $S$ is a closed orientable surface of genus at least two.
We refer to $S \times \{1\}$ as the \textit{outer boundary} and the other boundary components as the \textit{inner boundary} of the compression body. None of the inner boundary surfaces are 2-spheres.
We denote the outer boundary by $S$ rather than $S \times \{1\}$.

\end{dfn}

\begin{dfn}
A \textit{Heegaard splitting} of a compact orientable 3-manifold $M$ is a closed orientable surface $S$ embedded in $M$ so that splitting $M$ along $S$ gives two regions which are handlebodies or compression bodies. 

\end{dfn}

\begin{dfn}

The \textit{Hempel distance} of a Heegaard splitting $S$ for $M$ is defined as follows. Consider the collections of curves $\mathcal C$ and $\mathcal C^{\prime}$ which bound compressing disks for $S$ in the two regions, which are handlebodies or compression bodies on either side of $S$. A path between these collections is a sequence of essential simple closed curves $C=C_0,C_1, \dots C_k$ so that each pair $C_i, C_{i+1}$ are disjoint and $C_0  \in \mathcal C, C_k \in \mathcal C^{\prime}$. The Hempel distance is then the smallest value of $k$ amongst all such sequences. 

\end{dfn}

\begin{dfn}

A Heegaard splitting $S$ for $M$ is \textit{strongly irreducible} if every compressing disk on one side of $S$ meets every compressing disk on the other side of $S$. 

\end{dfn}

\section{Boundary patterns on handlebodies and compression bodies}

We start with the concept of suitable boundary pattern which comes from \cite{CR}. 
We extend this to the case of compression bodies. 

\begin{dfn} 
Suppose that $H$ is a handlebody or compression body. 
Assume that $S$ is a surface equal to the outer boundary of $H$, if $H$ is a compression body, or the whole boundary $S=\partial H$ if $H$ is a handlebody. 
Let $S$ be divided into two subsurfaces $P, \tilde P$ so that $\partial P = \partial \tilde P$, $S=P \cup \tilde P$, all the curves of $\partial P$ are essential in $S$ and any compressing disk for $H$ must intersect each of $P, \tilde P$ in at least two essential arcs. 
Then we say that the pair $\{P, \tilde P\}$ is a \textit{suitable boundary pattern} for $H$. 
\end{dfn}

The following lemma comes from \cite{CR}. 
We give a proof here with the extension to compression bodies, as this is a crucial result for our algorithm. 

\begin{lem} Suppose that $H$ is a handlebody or compression body. 
Two subsurfaces $\{P, \tilde P\}$ form a suitable boundary pattern for $H$, where $S = \partial H$ or the outer boundary of $H$, depending on whether $H$ is a handlebody or compression body, if and only if the following conditions hold. 
Firstly $S=P\cup \tilde P$,  $\partial P = \partial \tilde P$ and every curve of $\partial P$ is essential in $S$. 
Secondly there is a complete collection $\mathcal D$ of compressing disks for $H$ with the property that every disk $D$ in $\mathcal D$ intersects each of $P, \tilde P$ in at least two essential arcs and selecting any compressing disk $D^*$ disjoint from $\mathcal D$, a replacement of a disk in $\mathcal D$ by $D^*$ cannot give a new complete collection of compressing disks which have total numbers of arcs of intersection with $P, \tilde P$ less than that for $\mathcal D$.
\end{lem}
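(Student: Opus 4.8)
===

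The plan is to prove the lemma as a biconditional, establishing each direction separately. The backward direction (the two numbered conditions imply that $\{P,\tilde P\}$ is a suitable boundary pattern) is the easier one: given the complete collection $\mathcal D$ with the stated minimality property, I need only verify that an arbitrary compressing disk $E$ for $H$ meets each of $P$ and $\tilde P$ in at least two essential arcs. I would argue by contradiction. If $E$ meets, say, $P$ in fewer than two essential arcs, then after an isotopy and standard innermost-disk / outermost-arc surgery to remove inessential arcs, $E$ either is disjoint from $\partial P$ entirely (so lies in one of $P,\tilde P$ up to isotopy) or meets $\partial P$ in a single essential arc. In either case, surgering the members of $\mathcal D$ along $E$ (or using $E$ together with a subcollection of $\mathcal D$) produces a new complete collection of compressing disks whose total arc-intersection number with $P \cup \tilde P$ is strictly smaller — contradicting the minimality hypothesis. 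The main point to be careful about here is the bookkeeping that shows the new collection is still complete (spans the same kernel of $\pi_1(S)\to\pi_1(H)$, or equivalently still cuts $H$ into a ball or into $(\text{inner boundary})\times I$ plus balls) and that the arc count genuinely drops; this uses that $\partial P$-curves are essential in $S$, so no essential arc of intersection can be removed without changing the topology of the picture.

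For the forward direction, suppose $\{P,\tilde P\}$ is a suitable boundary pattern, so by definition every compressing disk meets each of $P,\tilde P$ in at least two essential arcs. I need to produce a complete collection $\mathcal D$ with the extra minimality property. The natural approach is to take $\mathcal D$ to be, among all complete collections of compressing disks for $H$, one that minimizes the total number of essential arcs of intersection with $\partial P$ (first minimize the number of disks to a minimal complete collection, then minimize arc intersections). Such a minimizer exists because these are nonnegative integers. By the suitability hypothesis every disk in $\mathcal D$ already meets each of $P,\tilde P$ in at least two essential arcs, so the first clause of condition two holds automatically. For the second clause, I take any compressing disk $D^*$ disjoint from $\mathcal D$ and suppose for contradiction that swapping $D^*$ for some $D_i\in\mathcal D$ yields a complete collection $\mathcal D'$ with strictly smaller total arc count. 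But $\mathcal D'$ is then a complete collection with total arc-intersection number with $\partial P$ strictly less than that of $\mathcal D$ — unless the swap changed the number of disks, which it does not since $D^*$ replaces exactly one disk and $\mathcal D'$ is assumed complete, hence of the same minimal cardinality. This contradicts the choice of $\mathcal D$ as a minimizer.

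I expect the main obstacle to be the extension to compression bodies, which is why the authors single this lemma out. For a handlebody, ``complete collection of compressing disks'' unambiguously means a collection cutting $H$ into a $3$-ball, and all the surgery arguments take place inside a ball; in a compression body $H$ one must instead cut along disks to obtain $(\partial_- H)\times I$ together with $3$-balls, and one must ensure throughout that the disks used are genuinely compressing (boundary essential in the outer boundary $S$) rather than $\partial$-parallel, and that surgeries along $D^*$ do not inadvertently create disks parallel into the inner boundary. Keeping the notion of ``complete'' stable under the surgery and swap moves — and verifying that an outermost essential arc of $E$ on $P$ or $\tilde P$ still cuts off a disk that can legitimately be used to surger a member of $\mathcal D$ — is the delicate part. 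The essentiality of all curves of $\partial P$ in $S$ is exactly what prevents degenerate configurations, and I would lean on it repeatedly. Once the compression-body case is set up correctly, the innermost-curve and outermost-arc reductions, and the counting argument, proceed exactly as in the handlebody case treated in \cite{CR}.
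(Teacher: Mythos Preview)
Your treatment of the direction ``suitable $\Rightarrow$ conditions'' is fine and matches the paper: pick a complete collection minimising the total number of arcs of intersection with $\partial P$, and both clauses of the second condition follow immediately.

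The gap is in the other direction, which you have misjudged as the easier one; it is in fact where all the work lies. The hypothesis on $\mathcal D$ is \emph{not} that $\mathcal D$ has minimal total arc count among all complete collections; it says only that no \emph{single} swap of a disk in $\mathcal D$ for a compressing disk $D^*$ \emph{disjoint from} $\mathcal D$ can lower the count. Your sentence ``surgering the members of $\mathcal D$ along $E$ \dots\ produces a new complete collection with strictly smaller arc count'' does not contradict that hypothesis: a wholesale surgery of several disks of $\mathcal D$ along $E$ is not a single disjoint-disk swap, and $E$ itself need not be disjoint from $\mathcal D$. What is actually needed is to manufacture, from the bad disk $E$, a specific compressing disk $D^*$ disjoint from $\mathcal D$ with at most two points on $\partial P$, and then argue that swapping it for some $D_i\in\mathcal D$ yields a complete collection with smaller count. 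The paper does this via an outermost-bigon argument on $E\cap\mathcal D$: since $\partial E$ meets $\partial P$ in at most two points, and there are at least two outermost bigons on $E$, at least one outermost bigon $D_0$ carries at most one point of $\partial P$; one then surgers the disk $D_i\in\mathcal D$ containing the outermost arc into two pieces $D_1,D_2$, and a short case analysis forces one of $D_0\cup D_1$, $D_0\cup D_2$ to be the desired $D^*$. This counting step is the heart of the argument and is absent from your sketch.

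Finally, your anticipated ``main obstacle''---extending to compression bodies---turns out to be a non-issue: the paper's proof runs verbatim in both cases, since nothing beyond the existence of a complete disk system and standard innermost/outermost surgery is used.
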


\begin{proof}
Let us suppose that $\{P, \tilde P\}$ is a suitable boundary pattern for $H$, and show that it satisfies the two conditions described in the statement. 
Then, the first condition is included in the definition of a suitable boundary pattern. 
To show the second condition, we take any complete collection of compressing disks for $H$ so that the total numbers of arcs of intersection with $\{P, \tilde P\}$ is minimal.
This collection satisfies the second condition. 

To show the other direction, suppose that subsurfaces $\{P, \tilde P\}$ of $S$ satisfy the conditions in the lemma. 
Let us prove this is a suitable boundary pattern. 
So suppose to the contrary, that some compressing disk $D^\prime$ meets each of $P, \tilde P$ in one essential arc or is disjoint from one of these subsurfaces. 
Consider the intersection of $D^\prime$ with the family $\mathcal D$. 
As usual, we can eliminate all loops of intersection by cutting and pasting. 
Assume next that  $\lambda$ is an outermost arc of intersection between $D^\prime$ and some disk $D_i$ in $\mathcal D$ so that $\lambda$ cuts off a bigon $D_0$ on $D^\prime$ with interior disjoint from $\mathcal D$. 
Let $D_1,D_2$ be the bigons obtained by splitting $D_i$ along $\lambda$. 
If either of the disks $D_0 \cup D_1, D_0 \cup D_2$ is inessential, then an isotopy of $D_i$ reduces the number of arcs of intersection with $D^\prime$. 
We can assume therefore that neither of the disks $D_0 \cup D_1, D_0 \cup D_2$ is inessential, by isotoping the family $\mathcal D$ until the number of arcs of intersection with $D^\prime$ is minimal. 

Notice that there must be at least two outermost bigons on $D^\prime$ and hence one of these bigons must intersect $\partial P$ in at most one point, since there are at most two such points on $\partial D^\prime$ by assumption. 
Hence we see that if both $D_1,D_2$ intersect $\partial P$  in at least two points, then a replacement of $D_i$ by either $D_0 \cup D_1, D_0 \cup D_2$ reduces the number of arcs of intersection with the boundary pattern. 
By the definition of a complete family of disks, at least one of these two replacements must be a new complete family of disks for $H$, so this contradicts our hypotheses in the lemma. 
The conclusion is that at least one of $D_1,D_2$ crosses $\partial P$  in at most one point. 

But now, one of the disks $D_0 \cup D_1, D_0 \cup D_2$ is a compressing disk for $H$ disjoint from the family $\mathcal D$ and which intersects $\partial P$  in at most two points. 
We can replace one of the disks of $\mathcal D$ by this disk and reduce the number of intersections with the boundary pattern, which is again a contradiction. 
This completes the proof of the lemma. 
\end{proof}

\begin{rem}
Note the algorithmic nature of the lemma, giving a bounded process to check if a given boundary pattern is suitable or not. 
For we can start with any boundary pattern on a handlebody or compression body. 
Pick a complete family of compression disks and isotope them to remove any inessential arcs of intersection with the boundary pattern. 
If any disks in the family intersect the boundary pattern in at most two arcs, then the boundary pattern is not suitable. 
Otherwise, one has to check whether any replacement can be done which decreases the number of intersections with the boundary pattern. 
There is an algorithm to check if such a replacement exists, since we need only search for compressing disks disjoint from the family with fewer intersections than those in the family.  
\end{rem}

A quick summary is as follows. 
If we split $H$ open along a complete family of compressing disks, then the boundary pattern becomes a system of arcs joining pairs of disjoint simple closed curves on a 2-sphere, where each compressing disk becomes a pair of circles. 
Normal curve theory can be used to list all simple closed curves on this 2-sphere, which are disjoint from all the simple closed curves and meet the system of arcs in fewer points than at least one of the disks. 
If such a curve separates the two circles representing a compressing disk and has fewer intersections with the arc system than this disk, then a replacement is possible. 
Otherwise no replacement can be achieved. 

\section{The decomposition algorithm}

The key idea is to use very short hierarchies as in \cite{AR} to decompose a $3$-manifold in our class into compression bodies and handlebodies with suitable boundary patterns. 
We can then view the process of building all our $3$-manifolds as starting with suitable boundary patterns on a collection of handlebodies and compression bodies and then gluing subsurfaces in pairs. 
An interesting observation will be that {\it any} gluing of pairs of subsurfaces is allowable to produce a $3$-manifold in our class. 
In particular, a given collection of handlebodies and compression bodies with suitable boundary pattern, where the subsurfaces can be matched in pairs, produces an infinite number of $3$-manifolds in our class. 

\begin{thm}
Let $M$ be a compact orientable irreducible atoroidal $3$-manifold which is either closed or has incompressible tori boundary. In both cases, assume $M$ has a closed embedded separating incompressible surface which is not boundary parallel. Choose a maximal collection $\mathcal S$ of disjoint embedded separating incompressible surfaces for $M$ which are not boundary parallel and not parallel to each other. 
Then there is a decomposition of $M$ into a collection of handlebodies and compression bodies with suitable boundary pattern. 
In fact, there is a collection of spanning surfaces $\mathcal S^*$ with the following properties. 
Firstly each region $R$ obtained by cutting $M$ open along $\mathcal S$ has spanning surfaces which are incompressible and boundary incompressible surfaces with boundary on each of the surfaces in $R \cap \mathcal S$. 
These spanning surfaces have the property that they do not separate $R$ so that when $R$ is cut open along the spanning surfaces, the result is either a handlebody or a compression body, where the inner boundary surfaces are tori in $\partial M$. 
The boundary pattern arises by taking subsurfaces in $P$ which are copies of the spanning surfaces and subsurfaces in $\tilde P$ which are in $\mathcal S$. 
\end{thm}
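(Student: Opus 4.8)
The plan is to realize the asserted decomposition as the tail of a hierarchy for $M$: first cut along the given family $\mathcal{S}$ of closed separating incompressible surfaces, and then cut each resulting region along a maximal family of essential spanning surfaces, in the spirit of the very short hierarchies of \cite{AR}. So let $R$ be a component of the manifold obtained by cutting $M$ open along $\mathcal{S}$. Then $R$ is compact and orientable, and it is irreducible since $M$ is irreducible and each surface of $\mathcal{S}$ is incompressible. Moreover $R$ is atoroidal in a relative sense: an incompressible torus in $R$ can be isotoped off $\mathcal{S}$ by innermost-disk arguments using incompressibility of $\mathcal{S}$, hence is incompressible in $M$ and so parallel to a torus of $\partial M$, and the parallelism region can likewise be pushed off $\mathcal{S}$, so the torus is boundary-parallel in $R$. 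We equip $R$ with the boundary pattern whose faces are the copies of surfaces of $\mathcal{S}$ in $\partial R$, leaving the tori of $\partial M\cap\partial R$ unmarked.

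Inside each such $R$ we choose $\mathcal{S}^{*}\cap R$ to be a collection of pairwise disjoint, pairwise non-parallel, incompressible and boundary-incompressible spanning surfaces with boundary on the $\mathcal{S}$-faces of $\partial R$, built one surface at a time so that each new surface is non-separating in the manifold cut along the previously chosen ones, and taken maximal with this property; we also arrange that every $\mathcal{S}$-face of $\partial R$ is met by at least one of them. (If some $\mathcal{S}$-face were disjoint from every spanning surface we can choose, that face would itself be a closed surface that is separating and incompressible in $M$ and, once checked not to be parallel into $\mathcal{S}$, could be adjoined to $\mathcal{S}$, contradicting maximality.) Since cutting along an incompressible, boundary-incompressible surface strictly reduces Haken's complexity, the process terminates; because no chosen surface separates, the resulting manifold $R'$ is connected, and its boundary is the union of a surface $S=P\cup\tilde P$ --- with $P$ the paired copies of the spanning surfaces of $\mathcal{S}^{*}\cap R$ and $\tilde P$ the leftover pieces of $\mathcal{S}$ --- together with the tori of $\partial M\cap\partial R$.

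The heart of the argument is to show that $R'$ is a handlebody, or a compression body whose inner boundary is the collection of tori of $\partial M\cap\partial R$. It is irreducible, and it carries no essential surface relative to its boundary pattern: an essential spanning surface in $R'$, reassembled across the cuts along $\mathcal{S}^{*}$ using incompressibility and boundary-incompressibility, would contradict maximality of $\mathcal{S}^{*}\cap R$; and an essential closed surface in $R'$ becomes incompressible in $M$ once its intersections with $\mathcal{S}\cup\mathcal{S}^{*}$ are removed by innermost-disk arguments, hence is a boundary-parallel torus by atoroidality, or a higher-genus surface which, if separating in $M$, violates maximality of $\mathcal{S}$, and, if non-separating in $M$, must already have been destroyed by the cuts along $\mathcal{S}^{*}\cap R$. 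Granting this, the classical characterization of handlebodies and compression bodies as the compact orientable irreducible $3$-manifolds with no essential surface relative to a boundary pattern identifies $R'$ as one of these, with $S$ as outer boundary and the tori as inner boundary; the product case $S\times I$ is excluded because it would force $\mathcal{S}$ to contain two parallel surfaces, or would make $M$ a surface bundle with separating fibre. Finally, $\{P,\tilde P\}$ is a suitable boundary pattern in the required sense: after removing inessential loops of intersection using irreducibility, a compressing disk $D$ for $R'$ cannot be disjoint from $P$ (else $\partial D$ is essential in a copy of an incompressible spanning surface, or $D$ is inessential), cannot be disjoint from $\tilde P$ (else $\partial D$ is essential in a subsurface of a surface of $\mathcal{S}$, hence essential in that surface, which is incompressible in $M$), and cannot meet $\partial P=\partial\tilde P$ in exactly two points (else, reassembled across the cut along $\mathcal{S}^{*}\cap R$, $D$ would boundary-compress a spanning surface onto $\mathcal{S}$, contradicting boundary-incompressibility); hence $D$ meets each of $P$ and $\tilde P$ in at least two essential arcs.

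The main obstacle is the claim that the fully cut piece $R'$ is genuinely a handlebody or a compression body. Beyond invoking the structure theorem, the substantive point is to prove that cutting $M$ first along $\mathcal{S}$ and then along a maximal non-separating family of essential spanning surfaces leaves no essential closed surface behind --- in particular no non-separating one, which the maximality hypothesis on $\mathcal{S}$ does not address, since $\mathcal{S}$ is maximal only among separating surfaces. I expect this to require a homological argument showing that such a family of spanning surfaces kills every class of $H_{2}$ except those carried by torus boundary components, together with careful bookkeeping of which boundary components of $R'$ become inner boundaries.
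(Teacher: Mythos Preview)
Your approach is genuinely different from the paper's, and the difference matters precisely at the two points you flag as difficult.

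The paper does not build $\mathcal{S}^*\cap R$ by choosing a maximal non-separating family and then arguing nothing essential survives. Instead it constructs the spanning surfaces in one stroke by a Stallings-type argument: one produces a homomorphism $\phi\colon H_1(R;\mathbb{Q})\to\mathbb{Q}$ which is nonzero on the image of $H_1(S_i;\mathbb{Q})$ for every $\mathcal{S}$-face $S_i$ of $\partial R$ (and, in the bounded case, zero on the images of the torus components of $\partial M$), realizes $\phi$ by a map $f\colon R\to S^1$, and takes the spanning surfaces to be a surgered $f^{-1}(x_0)$. The existence of such a $\phi$ is an elementary linear-algebra argument using the fact that each $S_i$ contributes an image of rank at least $\text{genus}(S_i)$ in $H_1(R;\mathbb{Q})$. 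The handlebody/compression-body conclusion is then obtained by the same maximality-of-$\mathcal{S}$ compression argument you use.

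This buys exactly what your proposal is missing. First, the condition on $\phi$ \emph{forces} the spanning surface to meet every $\mathcal{S}$-face, with no further argument needed; your justification for this step is circular (the face in question is already a member of $\mathcal{S}$, so it cannot be ``adjoined to $\mathcal{S}$'' to contradict maximality). Second, the homological construction is precisely the ``homological argument showing that such a family of spanning surfaces kills every class of $H_2$'' that you anticipate needing in your final paragraph --- the paper front-loads it rather than leaving it as an obstacle at the end. Your outline for the suitable-boundary-pattern verification, on the other hand, matches the paper's.
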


\begin{proof}
It suffices to construct the spanning surfaces inside a single region $R$ obtained by cutting $M$ open along $\mathcal S$. 
Assume first that $R$ has all boundary surfaces which are copies of surfaces in $\mathcal S$. 
In this case, we follow the argument in \cite{AR}. 
It suffices to find a homomorphism $\phi$ from $H_1(R,\mathbb{Q})$ onto $\mathbb{Q}$ so that every component $S_i$ of $\partial R$ has the property that the image of the inclusion map $H_1(S_i,\mathbb{Q}) \rightarrow H_1(R,\mathbb{Q})$ has image which is not in the kernel of $\phi$. 
Once $\phi$ has been constructed, then we follow the argument of Stallings \cite{S}. 
Namely we can construct a map $f:R \rightarrow S^1$ so that the induced homomorphism on first homology is $\phi$. 
Then surgering the pullback of a point $x_0$ in $S^1$, we can replace $f$ by a homotopic map, again denoted by $f$, with the property that $f^{-1}(x_0)$ is collection of required spanning surfaces. 

To build $\phi$, note that as is well known, each component $S_i$ of $\partial R$ has induced inclusion $H_1(S_i,\mathbb{Q}) \rightarrow H_1(R,\mathbb{Q})$ with image $J_i$ having rank at least the genus of $S_i$. 
So one can build a homomorphism $\phi_i:H_1(R,\mathbb{Q}) \rightarrow \mathbb{Q}$ so that the image of $J_i$ is non zero. 
Then, by taking an appropriate linear combination of these maps $\phi_i$, we get the homomorphism $\phi$ required. 

The final step is straightforward, once we have built the spanning surfaces, cutting $R$ open along them must give a handlebody. 
For we get a connected $3$-manifold and compressing the boundary cannot give any closed separating incompressible surfaces which are not boundary parallel in $R$, for otherwise we would contradict the maximality of the collection of surfaces in $\mathcal S$. 
The conclusion is that the boundary is completely compressible and so the manifold must be a handlebody. 
Finally notice that the boundary pattern induced as in the statement of the theorem, is indeed a suitable boundary pattern. 
For if we had any compressing disks meeting the boundary pattern in fewer then four arcs, then either a spanning surface would be boundary compressible, or a spanning surface or surface in $\mathcal S$ would be compressible. Since neither is the case, this completes the discussion of the first case, once $\phi$ is built. 

Next, in the case that $R$ has some boundary components in $\partial M$, the only difference from the previous case is that we need a homomorphism $\psi:H_1(R,\mathbb{Q}) \rightarrow \mathbb{Q}$ with the properties that for each component $S_i$ of $\partial R$ which is a copy of a surface in $\mathcal S$, the image $J_i$ of the mapping $H_1(S_i,\mathbb{Q}) \rightarrow H_1(R,\mathbb{Q})$ is not in the kernel of $\psi$, whereas if $S_i$ is a boundary torus of $\partial M$, then $J_i$ is in kernel $\psi$. 

The construction of $\psi$ follows a similar pattern to the previous argument. 
The main difference is that we can find a mapping $\psi_i$ for boundary surfaces in both $\partial R$ and in $\mathcal S$, so that $\psi$ maps each image of the first homology of a boundary torus to zero. 
This follows by noting that the image $J_i$ of the mapping $H_1(S_i,\mathbb{Q}) \rightarrow H_1(R,\mathbb{Q})$ always contains elements not in the images of the first homology of the boundary tori. 
So it is straightforward to find such mappings $\psi_i$ and then take a linear combination to find $\psi$.

As in the previous case, when we cut $R$ open along all the spanning surfaces, we obtain a manifold with two types of boundary surfaces. 
One type comes from surfaces in $\mathcal S$ cut open along spanning surfaces. 
The other type are boundary tori. 
Compressing the first types of surface must result in a collection of products $T^2 \times [0,1]$, one for each boundary torus, since any other possibility will contradict the maximality of the family $\mathcal S$. 
Proving the boundary pattern is suitable is the same as above. 
Note that the first type of boundary surfaces form the outer boundary and the boundary tori form the inner boundary of the compression body. 
This completes the proof. 
\end{proof}

\section{Heegaard diagrams}

We are interested first in the case where $S$ is a closed orientable surface of genus $2$ and two 2-handles are attached to $S \times [0,1]$ along separating essential curves $C \subset S \times \{0\}$ and $C^\prime \subset S \times \{1\}$. 
If we project both curves $C,C^\prime$ onto $S$, abusing notation by using the same symbols for the projected curves, then $|C \cap C^\prime| =4k$, where $k$ is a positive integer. Here we also assume that the projected curves cross transversely and minimally. 
Let $\mathcal C$ denote the curve complex for $S$. 
We are interested in paths $C_0,C_1, \dots C_{3k}$ in $\mathcal C$, where the three curves $C_{3i}, C_{3i+1}, C_{3i+2}$ are disjoint and non parallel and hence form a 2-simplex in the curve complex, with $C=C_0, C^\prime =C_{3k}$. 
We also want the three curves $C_{3i}, C_{3i-1}, C_{3i-2}$ to be disjoint and non parallel. 
Moreover we require that each $C_{3i}$ is separating, both $C_{3i+1}$ and $C_{3i+2}$ are non separating and $|C_{3i} \cap C^\prime|$ and $2|C_{3i+1} \cap C^\prime|$, $2|C_{3i+2} \cap C^\prime|$ are all decreasing, as $j=3i,3i+1,3i+2$ increases. 
Note that the simplest such a path of curves arises  when each pair $C_{3i}, C_{3(i+1)}$ meets in exactly four points, for $0 \le i \le k-1$. We will focus on this case. 
Finally we order the curves so that $C_{3i-2}$ and $C_{3i+1}$ are on the same side of $C_{3i}$ for each $1 \le i \le k-1$.
See Figure \ref{fig} for simple example.

\begin{figure}[htbp]
	\begin{center}
	\includegraphics[trim=0mm 0mm 0mm 0mm, width=.5\linewidth]{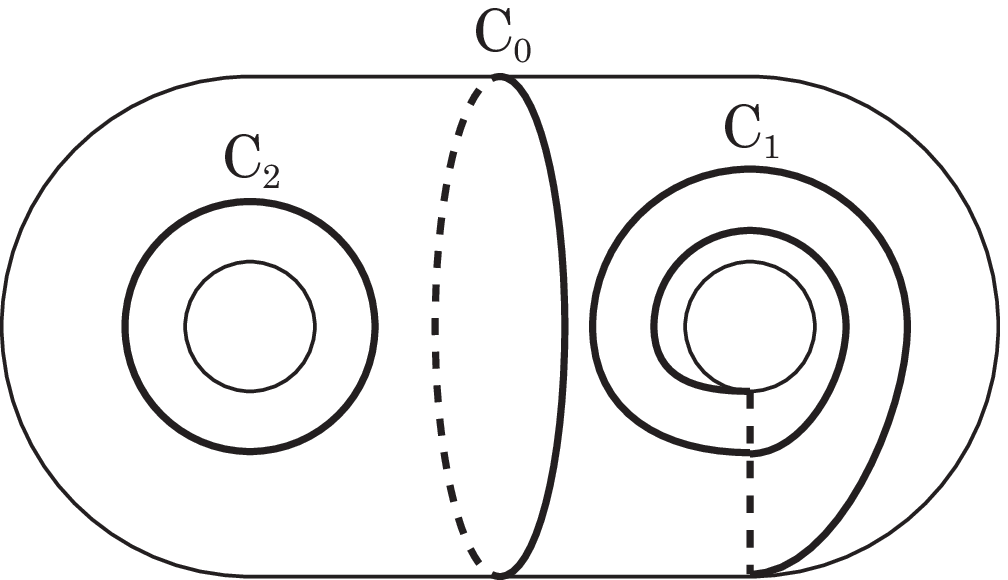}
	\includegraphics[trim=0mm 0mm 0mm 0mm, width=.5\linewidth]{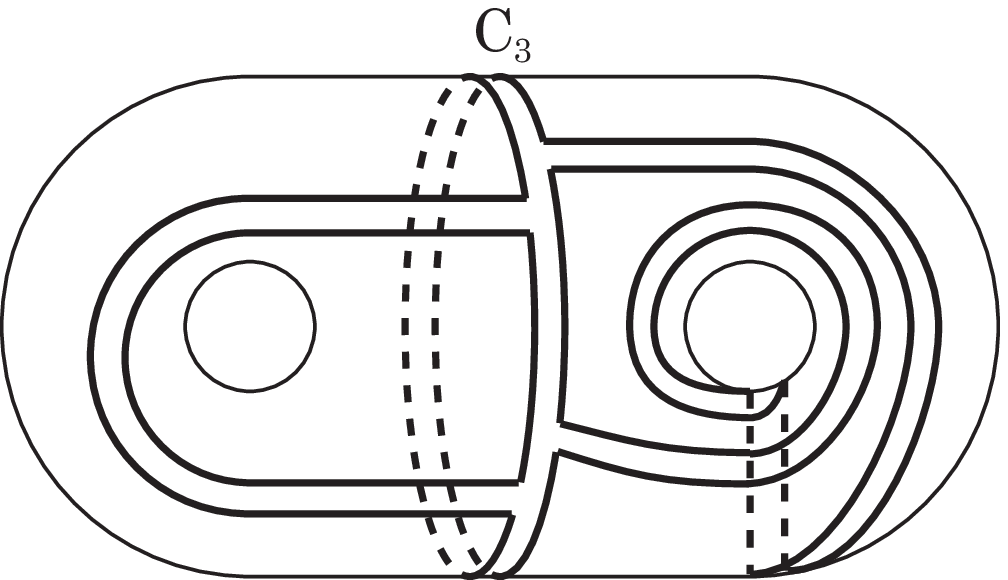}
	\end{center}
	\caption{}
	\label{fig}
\end{figure}

Note that this simple Heegaard diagram $C, C^\prime$ has the following form. 
We can view $C$ as a standard essential separating curve splitting $S$ into two once-punctured tori $T_+,T_-$. 
Moreover the intersections $C^\prime \cap T_+, C^\prime \cap T_-$ are then families of essential arcs. 
The slopes of these arcs are not important, just the number of each type. 
So we can label these arcs by three non-negative integers $n_1,n_2,n_3$ for $C^\prime \cap T_+$ and $m_1,m_2,m_3$ for $C^\prime \cap T_-$.

Next, the collection of arcs of $C^\prime \cap T_+,$ attached to $C$ can be viewed as a circle with a collection of arcs joining pairs of points and similarly for $C^\prime \cap T_-$. 
A rotation $R$ of $C$ is required to determine how to match up these two families. We do this by rotating an end point of some chosen arc of $C^\prime \cap T_+$ so that it matches a similar end point of a chosen arc of $C^\prime \cap T_-$. 
Our convention is that the base points are chosen on $C$ corresponding to the first arcs in the families labelled $n_1,m_1$ and the rotation $R$ is done clockwise. 
We can then refer to the Heegaard diagram ${\mathcal H}=(n_1,n_2,n_3,m_1,m_2,m_3,R)$.
Note that there are obvious necessary conditions for such a diagram to give a single separating curve, namely $n_1 + n_2 + n_3 = m_1 + m_2 + m_3$ and the sum of $n_i$'s is even.

\begin{rem}
For the above procedure, the following questions arise naturally. 

 \begin{itemize}
 
 \item Find sufficient conditions on $\mathcal H$ to ensure that the resulting curve is separating and connected. 
 It would be nice to even have an iterative scheme to build such diagrams. A possible approach is  by band summing. Namely taking two parallel arcs in $T_+$ and removing them, so long as there are parallel arcs in $T_-$ which have ends on either side of the first pair, then we can delete both pairs and if the first diagram gives a connected curve, so does the second.
 
 \item Find a suitable generalization to the case of arbitrary genus $g$. 
 Possibly this might work better with a family of $g-1$ separating 2-handles attached to each of $S \times \{0\}$ and to $S \times \{1\}$. So this would give a family of $g-1$ separating surfaces - see below. 
 
 \end{itemize}
 
 \end{rem}

\section{Paths and separating incompressible surfaces}

As in the previous section, 
let $M$ be a manifold obtained by adding a pair of 2-handles to $S \times [0,1]$ along its two boundary components to curves $C, C^\prime$.
We put some additional conditions on the path $C_0,C_1, \dots C_{3k}$ in $\mathcal C$, which are sufficient to build a pair of separating incompressible surfaces in Morse position relative to the height function on $M$. 

\begin{thm}
Let $M$ be the manifold obtained by attaching two 2-handles to $S \times [0,1]$ along separating essential curves $C \subset S \times \{0\}$ and $C^\prime \subset S \times \{1\}$, where $S$ is a closed orientable surface of genus $2$. 
Consider a path $C_0,C_1, \dots C_{3k}$ in the curve complex $\mathcal C$ for $S$ as in the previous section. 
Suppose that $C_{3i}$ intersects $C_{3(i+1)}$ in four points, the two curves $C_{3i-2}, C_{3i+1}$ on the same side of $C_{3i}$ must intersect at least twice, and the two curves $C_{3i-1}, C_{3i+2}$ on the other side of $C_{3i}$ must also meet at least twice, for $0 \le i \le k-1$. 
Then $M$ contains two disjoint closed orientable separating incompressible surfaces $J, J^\prime$. 
\end{thm}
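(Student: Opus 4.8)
The plan is to construct $J$ (and $J'$) explicitly from the path $C_0,\dots ,C_{3k}$, placing it in Morse position with respect to the height function $h\colon M\to[0,1]$ whose generic level sets are copies of $S$, the two $2$-handles being attached near heights $0$ and $1$. First I would build the surface one slab at a time. Over the slab $S\times[t_{3i},t_{3(i+1)}]$ lying between generic heights just below the levels of $C_{3i}$ and $C_{3(i+1)}$, I want a piece of surface meeting the bottom level in the separating curve $C_{3i}$, meeting the top level in $C_{3(i+1)}$, and passing at intermediate heights through the disjoint non-separating pair $C_{3i+1}, C_{3i+2}$, using a controlled number of saddle (and birth--death) tangencies, the count governed by the hypothesis $|C_{3i}\cap C_{3(i+1)}|=4$. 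Since $\{C_{3i},C_{3i+1},C_{3i+2}\}$ is a $2$-simplex of $\mathcal C$ and $C_{3i+1}$, $C_{3i+2}$ lie on opposite sides of $C_{3i}$, the tangencies splitting off $C_{3i+1}$ and those splitting off $C_{3i+2}$ can be given disjoint supports in $S$, so the slab piece is embedded.

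Stacking these pieces for $i=0,\dots ,k-1$ produces an embedded surface in $S\times[t_0,t_{3k}]$ with bottom curve $C=C_0$ and top curve $C'=C_{3k}$; capping $C_0$ off with the core disk of the $2$-handle attached along $C$ and $C_{3k}$ with the core disk of the $2$-handle along $C'$ closes it to a surface $J\subset M$. Because every $C_{3i}$ and both capping curves are separating, $J$ is separating (hence two-sided and orientable); it has negative Euler characteristic, indeed genus at least two once $k\ge 1$, so it is not a sphere or a torus. For $J'$ I would take a parallel push-off of $J$, or build a second such surface from the same path with tangency supports disjoint from those used for $J$; either way $J\cap J'=\emptyset$. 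This is also where the thin/thick bookkeeping of the introduction lives: the level just below each $C_{3i}$ is a thin level, and the two sides of $J$ see the adjacent tangencies asymmetrically, which is the side-dependent thin/thick description; moreover the spanning surfaces cut off between consecutive thin levels are incompressible and boundary incompressible by the earlier sections, so that $J$ together with these spanning surfaces realizes a hierarchy.

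The substance is incompressibility. Suppose $D$ is a compressing disk for $J$. I would put $D$ in Morse position relative to $h$ and, using irreducibility of $M$, remove by the usual innermost-disk and outermost-arc exchanges all trivial circles and arcs of $D\cap J$ and $D\cap(S\times\{t\})$, together with any sphere components. After this $\partial D$ may be isotoped into a neighbourhood of one thin level $C_{3i}$ and is then carried by $J$ there, bounding a disk to one side of $J$. Following that disk up to the next thin level $C_{3(i+1)}$ and down to the previous one $C_{3(i-1)}$ yields an essential curve on the relevant side of $C_{3i}$ that is disjoint from, or meets in fewer than two points, one of the two curves lying on that same side, namely $C_{3i-2}$ and $C_{3i+1}$ — and this contradicts the hypothesis that $C_{3i-2}$ and $C_{3i+1}$ (and on the opposite side $C_{3i-1}$ and $C_{3i+2}$) meet at least twice. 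At $i=0$ and $i=k$ the cores of the two $2$-handles play the role of the missing neighbours, and the same inequalities show the spanning surfaces are not boundary compressible. Having no compressing disk, and not being a sphere in the irreducible manifold $M$, the surface $J$ is incompressible by the loop theorem; the same holds for $J'$. The hardest part, I expect, is making this last reduction precise: controlling which complementary component of $J$ the disk $D$ lies in and keeping careful track of which curves $C_j$ lie on which side of each separating curve $C_{3i}$ — this is exactly where the ordering convention placing $C_{3i-2}$ and $C_{3i+1}$ on the same side is used — so that a compressing disk really does force one of the intersection inequalities to fail. The construction of $J$ and the checks that it is separating, essential, and not a torus are comparatively routine.
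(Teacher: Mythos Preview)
Your construction of $J$ and $J'$ matches the paper's (they stack pairs of pants rather than describe saddles, and build $J,J'$ simultaneously, but these are the same surfaces). The gap is in the incompressibility argument, and it is genuine. Two steps are unjustified. First, the assertion that after innermost/outermost moves ``$\partial D$ may be isotoped into a neighbourhood of one thin level $C_{3i}$'': a priori $\partial D$ is an essential curve on $J$ that may cross many of the level curves $C_j$, and the standard exchanges on $D\cap(S\times\{t\})$ do not by themselves confine $\partial D$ to one slab. What is actually needed is that the \emph{level annuli} in the complementary regions of $J\cup J'$ (those with boundary curves parallel to $C_{3i+1}$ in the region bounded by $J$, to $C_{3i+2}$ in the region bounded by $J'$, and to $C_{3i}$ in the region between) are incompressible and boundary incompressible there; only then can $D$ be isotoped off them and into a single piece. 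Second, your contradiction is misdrawn: you produce a curve ``disjoint from, or meeting in fewer than two points, one of $C_{3i-2}, C_{3i+1}$'' and claim this contradicts $|C_{3i-2}\cap C_{3i+1}|\ge 2$, but that hypothesis concerns how those two curves meet \emph{each other}; it says nothing directly about a third curve being disjoint from one of them.

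The paper closes both gaps with one tool, the suitable-boundary-pattern Lemma~3.2. Cutting a complementary region of $J\cup J'$ along the level annuli yields genus-$2$ or genus-$3$ handlebodies of the form $(\text{once-punctured torus})\times[0,1]$ or $(\text{four-punctured sphere})\times[0,1]$. On such a piece the boundary pattern is two copies of each of a pair such as $C_{3i-2}, C_{3i+1}$ on the two horizontal faces, and the product disks $(\text{arc})\times[0,1]$ form a complete meridian system. The hypothesis $|C_{3i-2}\cap C_{3i+1}|\ge 2$ (and likewise $|C_{3i-1}\cap C_{3i+2}|\ge 2$, and $|C_{3i}\cap C_{3(i+1)}|=4$ for the four-punctured-sphere pieces) is exactly what makes the minimality clause of Lemma~3.2 hold, so the pattern is suitable: \emph{every} compressing disk of the handlebody meets it in at least four essential arcs. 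This simultaneously rules out boundary compressions of the level annuli (establishing the localisation step) and compressing disks for $J$ or $J'$ with boundary inside a single piece (establishing the final contradiction). Without invoking this lemma or proving an equivalent disk-busting statement, the bridge from the intersection hypotheses to ``no compressing disk'' is missing from your argument.
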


\begin{proof}
Two disjoint surfaces $J, J^\prime$ in $M$ can be build from $\mathcal C$ as follows. 
Start with two parallel copies of the disk representing the 2-handle attached at $C=C_0$ at a level $S \times \{t_1\}$, where $t_1 =0$. 
Next we glue on a pair of pants with one curve at $C_0$ and the other two curves along parallel copies of $C_1$ (respectively $C_2$) at a level $S \times \{t_2\}$. 
Next  two pairs of pants are glued on with the first having two boundary curves to the copies of $C_1$ and the second using two copies of $C_2$ at a level $S \times \{t_2\}$ and two copies of $C_3$ at a level $S \times \{t_3\}$. 
We continue on until eventually two pairs of pants are glued to two copies of $C_{3k-2}$ and two copies of $C_{3k-1}$ at a level $S \times \{t_{k-1}\}$ and to two copies of the curve $C^\prime = C_k$ at at a level $S \times \{t_k\}$, where $t_k=1$. 
Note that $0=t_1<t_2< \dots t_k=1$. 
So this completes the construction of $J, J^\prime$. 

To show that $J, J^\prime$ are incompressible, by using standard inner-most arguments, 
it suffices to prove that there cannot be a compressing disk in any of the three regions in the complement of $J \cup J^\prime$. 
To summarize the argument, assume there is such a disk, say $D$. 

Note that each of the three regions has a collection of annuli with boundary on $J, J^\prime$. 
The annuli have boundary curves given by the parallel copies of $C_0, C_1, \dots, C_{3k}$, which are essential on $J$ except for $C_0$ and $C_{3k}$, and so are incompressible except for the top and bottom ones. 
In fact, the region between $J$ and $J^\prime$ has annuli with boundaries on copies of $C_0,C_3, \dots C_{3k}$, the region bounded by $J$ has annuli with boundaries on copies of $C_1,C_4, \dots, C_{3k-2}$ and the final region bounded by $J^\prime$ has annuli with boundaries on \newline $C_2,C_5, \dots, C_{3k-1}$.

\begin{clm}
The annuli are not boundary compressible in the regions. 
\end{clm}

\begin{proof}
Suppose that there was such a boundary compression disk, which can be assumed to be a disk with one boundary arc on such an annulus, one on $J$ or $J^\prime$ and interior disjoint from all the annuli, $J$ and $J^\prime$. 
Now the disk must lie in a handlebody of genus 2 or a region which is of the form torus $\times [0,1]$. The reason is that these are the regions formed by cutting $M$ open along $J$ or $J^\prime$ and all the annuli.
The latter type of region is a product, so we see that there are no such a boundary compression disk in such a region. 
The former is a region bounded by $J$ or $J'$ having the annuli with boundaries on copies of $C_1,C_4, \dots, C_{3k-2}$ or $C_2,C_5, \dots, C_{3k-1}$, respectively. 
This region can be regarded as a punctured torus $\times [0,1]$, with copies of the annuli on the top and the bottom surfaces. 
Our strategy here is to apply Lemma 1. 
Namely if we have a complete collection of compressing disks for a genus 2 handlebody region, with the property that any disjoint compressing disk meets the boundary pattern at least as many times as the disks in the family, then the boundary pattern is disk busting. 
In our situation, the way to implement this is to think of two copies of each of the curves $C_{3i-2}, C_{3i+1}$ or $C_{3i-1}, C_{3i+2}$ as the boundary pattern and the disks of the form arc $\times [0,1]$ as the complete family. 
Then the assumption that the curves $C_{3i-2}, C_{3i+1}$ intersect twice, implies that there cannot be a disk which crosses an annulus with two boundary curves parallel to $C_{3i-2}$ but does not meet $C_{3i+1}$ and similarly for the case of $C_{3i-1}, C_{3i+2}$. 
So this proves our assertion about the annuli. 
\end{proof}

Therefore any such a disk $D$ can be isotoped off these annuli and so must lie between levels of the form $S \times \{t_i\}, S \times \{t_{i+2}\}$, for $0 \le i \le 3k-2$ or in the region above the level $S \times \{t_1\}$ or below the level $S \times \{t_{3k-1}\}$. 
The latter are trivial regions which are 2-handles or products of the form torus $\times [0,1]$.

To finish, we need to prove that there are no compressing disks inside these regions. 
Now the former type are easily seen to be handlebodies of genus 2 or genus 3 with a natural product structure of the form once-punctured torus $\times [0,1]$ or four punctured sphere  $\times [0,1]$, depending on whether the region has boundary one surface or two surfaces. 
For the first type of region, the condition that $C_{3i-2}$ and $C_{3i+1}$ meet at least twice, implies that the any compressing disk, which is of the form arc $\times [0,1]$, where the arc has both ends on $C_{3i}$, must meet these curves and so cannot lie entirely on $J$. 
It suffices to show then that there cannot be a general compressing disk which misses both the curves $C_{3i-2}, C_{3i+1}$ or $C_{3i-1}, C_{3i+2}$. 
To do this, we use Lemma 1 again. 
In our situation, again, the way to implement this is to think of two copies of the curves $C_{3i-2}, C_{3i+1}$ or $C_{3i-1}, C_{3i+2}$ as the boundary pattern and the disks of the form arc $\times [0,1]$ as the complete family. 

For the second type of region, we repeat the argument using the boundary pattern consisting of two copies of each of $C_{3i},C_{3(i+1)}$. 
Again the disks are of the form arc $\times [0,1]$ as the complete family, where the arcs have ends on the four punctured sphere, i.e the curves $C_{3i+1}, C_{3i+2}$. 
Each disk crosses the boundary pattern at least four times and the lemma above applies as previously. 
In fact, there is a complete system of quadrilateral disks of this form which meet each curve exactly once. 
So in fact, this region has a product structure by gluing these quad disks together and so $J, J^\prime$ are actually parallel. 
So this completes the proof that both $J$ and $J^\prime$ are incompressible. 
\end{proof}

\begin{rem}
The same method could be used to consider families of surfaces in higher genus surface $\times [0,1]$ with 2 handles attached and also individual surfaces with more complex conditions implying the disk busting conditions. An interesting challenge is to see if such an approach is strong enough to show that a `generic' Heegaard splitting gives a 3-manifold containing a separating incompressible surface. 

\end{rem}

\begin{rem}

Next,  consider the result of Dehn filling of each of the 4 boundary tori of the 3-manifold $M$ in the above theorem. Clearly, the separating incompressible surface constructed has `accidental parabolics' on each of the four cusps. We can apply a well-known result of Y. Q. Wu \cite{wu}. There it is shown that any closed 3-manifold $M^\prime$ obtained by Dehn filling $M$ along a curve in each cusp, which meets the (unique) accidental parabolic for the surface at least twice, then the separating surface remains incompressible in $M^\prime$. So this gives a large number of examples of closed 3-manifolds $M^\prime$ of Heegaard genus two possessing separating incompressible surfaces. Moreover it is easy to see that most of the these examples are rational homology 3-spheres and hence have no non-separating incompressible surfaces. 

\end{rem}

\section{Thin position}

Suppose we start with a separating closed orientable incompressible surface $J$ and a strongly irreducible Heegaard splitting $S$ of a closed irreducible orientable 3-manifold $M$ or compact irreducible orientable 3-manifold $M$ with incompressible tori boundary. 
We would like to reverse the process of the previous sections and write levels of $S$ as spanning surfaces to complete a hierarchy of $M$ starting with $J$. 
Note that we do not require $J$ to be connected, so it could be a separating family of surfaces, with individual members which are non-separating. We use the notation $S_t$, $0<t<1$, as the singular foliation of $M$ by copies of $S$. . We will denote the two handlebodies or compression bodies obtained by splitting $M$ open along $S_t$ by $H^t_1, H^t_2$. As $t \to 0,1$, $H^t_1$ or $H^t_2$ respectively will converge to a graph or a graph connected to some of the tori boundary components of $M$. 

For $t$ small enough,  we can assume that $J$ meets $H^t_1$ in a family of meridian disks. 
Next denote the two sides of $J$ by $M_+,M_-$. 
We can initially apply all possible boundary compressions of $J \cap H^t_2$ so that a band of $S_t$ gets pushed across $J$ from $M_-$ to $M_+$. (See the Appendix to \cite{Ha} for a very elegant discussion of this procedure). 
The effect is to make $S_t \cap M_-$ thin and $S_t \cap M_+$ thick. 
Fix this copy of $S_t$ as level one and denote it by $S_{t_1}$ with the initial position of $S$ as $S_{t_0}$, where $J$ meets $H^{t_0}_1$ in meridian disks. 
 
Now repeat the process for $H^{t_1}_2$ bounded by $S_{t_1}$, but this time interchanging the roles of $M_+,M_-$ so that bands of $S$ gets pushed across $J$ from $M_+$ to $M_-$. 
This will give a new level $S_{t_2}$ for which $S_{t_2} \cap M_-$ is thick and $S_{t_2} \cap M_+$ thin. 
We iterate until eventually $J$ meets a handlebody or compression body corresponding to $H^t_2$ in meridian disks only, for $t$ close to $1$.
Call this level $S_{t_k}$ and assume that $t_0 = \epsilon, t_k=1-\epsilon, t_1 < t_2 < \dots <t_k$, for $\epsilon$ sufficiently small. 

Note that our surfaces $J$ and $J^\prime$ above are in thin position in exactly this sense. 
We call the intersections of some level surface $S_t$ with the sides of $J$ $M_+,M_-$ the sides of $S_t$ relative to $J$. As usual when we put $J$ into Morse thin position relative to the singular foliation corresponding to $S$, this means there are a finite number of critical levels $\hat t$, for $0 < \hat t <1$, so that at such a level there is a single saddle critical point.

Our first observation is that there must be at least one thin surface which is incompressible. 

\begin{thm}
Suppose that $J$ is separating and incompressible and $S$ is a strongly irreducible Heegaard splitting. Denote the two sides of $J$ as $M_+, M_-$. Then either;
\begin{itemize}

\item  there is some non critical level $S_t$ so that $S_t \cap M_+$  is incompressible and $S_{t} \cap M_-$ has compressing disks on both sides of $S_{t}$, or the same with $M_+, M_-$ interchanged.

\item there is a critical level $\hat t$ so that $S_{t} \cap M_+$ is incompressible for $t<{\hat t}$ and $t$ close to ${\hat t}$, and $S_{t} \cap M_-$  is incompressible for $t>{\hat t}$ and $t$ close to ${\hat t}$, or the same with $M_+$, $M_-$ interchanged.

\item there is a critical level ${\hat t}$ so that both $S_{t} \cap M_+$ and $S_{t} \cap M_-$ are incompressible for $t>{\hat t}$ and $t$ arbitrarily close to ${\hat t}$.
\end{itemize}

\end{thm}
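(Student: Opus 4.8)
The plan is to argue by a continuity/intermediate-value argument on the compressibility status of the two pieces $S_t \cap M_+$ and $S_t \cap M_-$ as the level $t$ runs from near $0$ to near $1$, using strong irreducibility of $S$ to constrain which simultaneous compressions can occur and incompressibility of $J$ (together with the thin position) to control how compressing disks meet $J$.

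First I would pin down the behaviour at the two ends. Near $t=0$, the construction of the thin position (maximal boundary compressions pushing bands from $M_-$ to $M_+$) together with the hypothesis that $J$ meets $H^{t_0}_1$ in meridian disks shows that $S_t \cap M_-$ is incompressible for $t$ slightly above $0$; symmetrically $S_t \cap M_+$ is incompressible for $t$ slightly below $1$. These only use incompressibility of $J$ and irreducibility of $M$. Since having a compressing disk is an open condition on non-critical levels while not having one is closed, the set of $t$ with $S_t \cap M_-$ incompressible is a finite union of closed intervals of non-critical levels plus possibly some critical endpoints, and likewise for $S_t \cap M_+$; the first contains a neighbourhood of $0$ and the second a neighbourhood of $1$.

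Second, and this is where strong irreducibility enters, I would establish the key constraint: for no non-critical $t$ can $S_t \cap M_+$ compress into $H^t_1 \cap M_+$ while simultaneously $S_t \cap M_-$ compresses into $H^t_2 \cap M_-$, nor the same with $H^t_1, H^t_2$ interchanged. Such a pair of compressing disks lie on opposite sides of the Heegaard surface $S_t$ and can be made disjoint, since their interiors are separated by $J$, contradicting strong irreducibility. A companion point, using that $J$ is incompressible and in thin position, is that at a non-critical level one cannot have both $S_t \cap M_+$ and $S_t \cap M_-$ incompressible: together with a compressing disk for $S_t$ (which exists, $S_t$ being a positive-genus Heegaard surface), the standard innermost-disk and outermost-arc cleanup of the intersection with $J$ forces a band move contradicting either incompressibility of $J$, irreducibility of $M$, or minimality of the thin position. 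Hence on each open interval of non-critical levels exactly one of three states holds: $S_t \cap M_+$ incompressible, $S_t \cap M_-$ incompressible, or neither.

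Third I would run the interpolation. The "incompressible side" is $M_-$ near $t=0$ and $M_+$ near $t=1$, so reading upward there is a first level $t^*$ at which the state "$S_t \cap M_-$ incompressible" ceases. If $t^*$ is non-critical, then just above $t^*$ the piece $S_t \cap M_-$ is compressible; the strong-irreducibility constraint together with incompressibility of $J$ forces those compressions of $S_t \cap M_-$ to occur on both sides of $S_t$ (a one-sided compression would persist down past $t^*$ or be incompatible with the compression data on the $M_+$ side), while $S_t \cap M_+$ is incompressible there, giving conclusion (1). If $t^*$ is critical, then $S_t \cap M_-$ is still incompressible just below $t^*$, the saddle at $t^*$ changes only the piece on one side of $J$, and analysing this band move shows that just above $t^*$ either $S_t \cap M_+$ has become incompressible (conclusion (2)) or both pieces are incompressible just above (conclusion (3)). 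The only remaining logical possibility, that above $t^*$ neither side is incompressible, is pushed to the next transition, where the incompressible side must eventually become $M_+$ to match the behaviour near $t=1$, and the same trichotomy recurs.

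The step I expect to be the main obstacle is the local analysis at the critical level $t^*$: determining exactly how the compressibility of $S_t \cap M_+$ and $S_t \cap M_-$ can change when a single band is pushed across $J$, and verifying that the only outcomes consistent with the failure of conclusion (1) are the transitions recorded in (2) and (3). This is a delicate argument combining the band move with innermost-disk surgery of compressing disks against $J$, and it is where incompressibility of $J$ and minimality of the thin position are used most heavily; by contrast the global interpolation and the strong-irreducibility constraint are comparatively routine once the local picture is in hand.
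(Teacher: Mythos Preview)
There is a genuine gap. In your second step you assert that at a non-critical level one cannot have both $S_t \cap M_+$ and $S_t \cap M_-$ incompressible, and you justify this by taking a compressing disk for $S_t$, cleaning up its intersection with $J$, and claiming the resulting outermost arc forces a band move contradicting thin position. This claim is false as stated: conclusion (3) of the theorem is precisely that for $t$ just above a certain critical level --- hence at non-critical levels --- both $S_t \cap M_+$ and $S_t \cap M_-$ are incompressible. Your own third step then invokes exactly this possibility, so the proposal is internally inconsistent. The outermost-arc argument does not rule out this state: the thin position in this paper is an iterative alternating construction rather than a global width-minimality hypothesis, and in any case an outermost arc on a compressing disk for $S_t$ produces a boundary compression of $J\cap H^t_i$, which does not in general reduce any complexity invariant at an arbitrary non-critical level. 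Once the trichotomy ``$M_+$ only / $M_-$ only / neither'' is replaced by the correct four possibilities, your interpolation has more transitions to analyse than you have accounted for.

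The paper's argument avoids this by tracking a different quantity. Rather than asking which of $S_t\cap M_\pm$ is incompressible, it first disposes of case (1) directly (if some $S_t\cap M_\pm$ compresses into both $H_1$ and $H_2$, strong irreducibility forces the other piece to be incompressible), and then, under the negation of case (1), follows which handlebody $H_1$ or $H_2$ contains the compressing disks for $S_t\cap M_\pm$. This is a genuine binary: for $t$ near $0$ any such disk lies in $H_1$, for $t$ near $1$ in $H_2$, and the switchover must occur at a critical level $\hat t$. A single band is pushed across $J$ at $\hat t$; the side the band leaves carries the $H_1$-disk just below $\hat t$ and the side receiving the band carries the $H_2$-disk just above, and strong irreducibility then forces the opposite piece to be incompressible on each side of $\hat t$, giving case (2). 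Case (3) is what remains when no compressing disk appears just above $\hat t$. The point is that the $H_1$/$H_2$ dichotomy is clean under the standing assumption, whereas the $M_+$/$M_-$ incompressibility status is not.
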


\begin{proof}
Suppose at some level $S_t$, one of $S_t \cap M_+$ or $S_t \cap M_-$ has compressing disks on both sides. 
Since $S$ is strongly irreducible, it is immediate that either $S_t \cap M_-$ or $S_t \cap M_+$, respectively is incompressible. 
So the first case of the theorem holds. 

On the other hand, assume that neither $S_t \cap M_+$ nor $S_t \cap M_-$ has compressing disks on both sides, for any value of $t$. 
We know that for $t$ small, there are compressing disks for $S_t \cap M_+$ or $S_t \cap M_-$ in $H_1$, whereas for $t$ close to $1$, there are compressing disks for $S_t \cap M_+$ or $S_t \cap M_-$ in $H_2$. 
At some level $S_u$ this must switch over in the sense that for $t<u$, any compressing disk for $S_t \cap M_+$ or $S_t \cap M_-$ must be in $H_1$, whereas for $t>u$, any compressing disk for $S_t \cap M_+$ or $S_t \cap M_-$ must be in $H_2$. 
The level $u$ must be a critical level ${\hat t}$ at which a band sum occurs which produces the first compressing disk for $S_t \cap M_+$ or $S_t \cap M_-$ in $H_2$, for $t > \hat t$. 
Suppose that there was a compressing disk for $S_t \cap M_+$ or $S_t \cap M_-$ in $H_2$ for $t>\hat t$ and $t$ arbitrarily close to $\hat t$. 
Note that a single band sum must occur as $t$ crosses the value $\hat t$. 
This pushes a band of $S_t$ across $J$. 
The side, say $M_+$, on which the band leaves must be the side containing the compressing disk in $H_1$ and the other side $M_-$ which receives the band must be where the first compressing disk for $H_2$ appears. 
Then, at such a level $t$ with $t<\hat t$ very close to $\hat t$, there cannot be any compressing disks in $S_t \cap M_-$, since $S$ is strongly irreducible, and similarly for $t>\hat t$ and $t$ very close to $\hat t$, there are no compressing disks for $S_t \cap M_+$. 
This gives the second case of the theorem. 

Finally the third case occurs when for $t>\hat t$ and $t$ arbitrarily close to $\hat t$, there are no compressing disks for either $S_t \cap M_+$ or $S_t \cap M_-$.
\end{proof}

If we consider the \textit{Hempel distance} of a Heegaard surface, which is a natural generalization of the strong irreducibility condition and was introduced in \cite{He1}, we obtain the following corollary. 

\begin{cor}
Suppose that $J$ is separating and incompressible and $S$ is a Heegaard splitting which has Hempel distance at least $4$. Then only the third possibility can occur. 
\end{cor}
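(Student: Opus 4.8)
The plan is to invoke the preceding theorem: since a Heegaard splitting of Hempel distance at least $4$ is in particular strongly irreducible, one of its three alternatives must occur, so it suffices to show that the first two are incompatible with $d(S)\ge 4$. The only input about distance is that, once every level surface $S_t$ is identified with $S$ along the foliation, a curve bounding a compressing disk for $S$ in $H_1$ and a curve bounding one in $H_2$ lie at distance at least $d(S)\ge 4$ in the curve complex of $S$; so in each bad case I will exhibit such a pair of curves at distance at most $3$.

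First I would put $J$ in thin Morse position relative to the foliation $S_t$ and, in addition, isotope it so that for generic $t$ no circle of $J\cap S_t$ bounds a disk in $S_t$ --- a standard reduction using incompressibility of $J$ and irreducibility of $M$ --- which forces every component of $J\cap S_t$ to be essential on $S$. Two further facts will be used. For generic $t$ one has $J\cap S_t\ne\emptyset$, since otherwise $J$ would lie in a single handlebody or compression body $H_i^t$ and hence be boundary parallel, which we exclude. And the boundary of any disk in $H_1^t$ or $H_2^t$ compressing one of the subsurfaces $S_t\cap M_{\pm}$ is essential on $S$: otherwise it bounds a disk in $S_t$ which, being essential in its subsurface, must contain a circle of $J\cap S_t$, hence (taking an innermost one) a trivial circle of $J\cap S_t$, contradicting the normalization. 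Such a disk boundary may moreover be isotoped into the interior of the subsurface $S_t\cap M_{\pm}$ containing it, hence off $J\cap S_t$ entirely.

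For the first alternative, fix the non-critical level $S_t$ with $S_t\cap M_+$ incompressible and $S_t\cap M_-$ carrying a disk $D_1\subset H_1^t$ and a disk $D_2\subset H_2^t$. By the above $\partial D_1,\partial D_2$ are essential on $S$ and disjoint from any component $c$ of $J\cap S_t$, which exists and is essential; so $d(\partial D_1,\partial D_2)\le d(\partial D_1,c)+d(c,\partial D_2)\le 2$, contradicting $d(S)\ge 4$ (here $d(S)\ge 3$ already suffices). For the second alternative we are at a critical level $\hat t$ carrying, for $t$ just below $\hat t$, a disk $D_1\subset H_1$ compressing $S_t\cap M_+$ and, for $t$ just above, a disk $D_2\subset H_2$ compressing $S_t\cap M_-$ (the case with $M_{\pm}$ interchanged is symmetric). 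Exactly one saddle of $J$ is passed at $\hat t$, so on $S$ the multicurves $J\cap S_{\hat t-\epsilon}$ and $J\cap S_{\hat t+\epsilon}$ agree outside a small disk $E$ and inside $E$ are the two states of that saddle. I then claim one can choose an essential component $\alpha$ of $J\cap S_{\hat t-\epsilon}$ and an essential component $\beta$ of $J\cap S_{\hat t+\epsilon}$ that are disjoint on $S$: if some component of $J\cap S_{\hat t-\epsilon}$ misses $E$ it is also a component of $J\cap S_{\hat t+\epsilon}$ and one takes $\alpha=\beta$; otherwise only the (at most two) components meeting $E$ matter, and the local model of a saddle --- its two states being the two non-crossing matchings of the four points where $J\cap S_t$ crosses $\partial E$, which are realized by disjoint arcs after slightly offsetting endpoints --- lets one isotope a pre-saddle component off the corresponding post-saddle component, these remaining essential by the normalization. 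Since $\partial D_1$ is disjoint from $\alpha$ and $\partial D_2$ from $\beta$, we get
\[ d(S)\le d(\partial D_1,\partial D_2)\le d(\partial D_1,\alpha)+d(\alpha,\beta)+d(\beta,\partial D_2)\le 1+1+1=3, \]
again a contradiction. Hence only the third alternative can occur.

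The main obstacle is the second alternative, and within it producing the disjoint pair $\alpha,\beta$: one must treat the case in which every component of $J\cap S_{\hat t-\epsilon}$ meets the saddle disk $E$, and check, from the normal form of a saddle, both that its two states can be drawn disjointly and that the curves obtained by merging or splitting along the saddle stay essential on $S$. A secondary technical point, used throughout, is arranging and maintaining the normalization of $J$ (no trivial circles of $J\cap S_t$) compatibly with thin Morse position and across critical levels; this follows the usual good-position arguments for incompressible surfaces in sweepouts but should be written out with care.
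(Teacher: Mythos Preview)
Your argument is correct and follows essentially the same route as the paper: rule out the first alternative by exhibiting distance at most $2$ via a curve of $J\cap S_t$, and rule out the second by using that the pre- and post-saddle families can be pushed off one another, giving a chain of length at most $3$. You supply more care than the paper does about the normalization (no inessential circles in $J\cap S_t$) and about why the compressing-disk boundaries are essential on $S$, but the strategy and the key observation---that a single saddle move lets one choose disjoint essential curves from the before and after level sets---are identical.
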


\begin{proof}
The first possibility in the theorem contradicts Hempel distance at least $3$. 
Recall that the second case occurs when a single band sum of $S_t$ across $J$ at the critical level $\hat t$ produces a compressing disk $D_1$ in $H_1$ for $S_t$ and $t<\hat t$, whereas there is a compressing disk $D_2$ for $S_{t^\prime}$ in $H_2$ for $t^\prime >\hat t$. 
There are either one or two curves of $S_t \cap J$ involved with the band sum. 
After the band sum, we get a new family of curves which can be pushed off the old family. 
But then we see that there is a compressing disk $D_1$ for $S_t$ in $H_1$ disjoint from $S_t \cap J$ for $t<\hat t$ and similarly a compressing disk $D_2$ for $S_{t^\prime}$ in $H_2$ for $t^\prime >\hat t$. 
We conclude that $\partial D_1$ is disjoint from $S_t \cap J$ which can be made disjoint from $S_{t^\prime} \cap J$ which is disjoint from $\partial D_2$. 
This contradicts the Hempel distance of $S$ being at least $4$. 
\end{proof}

\begin{rem}

Notice that although both sides of $S_t$ are incompressible in the conclusion of the corollary, they might not be boundary incompressible. However both these subsurfaces can be boundary compressed to form spanning surfaces for the regions on either side of $J$. 

\end{rem}

\begin{rem}
In the situation of the corollary, we can estimate the genus of $S$ by adding minimal genera of incompressible and boundary incompressible 
surfaces on each side of $J$, of course taking into account the number of boundary curves. 
So suppose that $J$ is a separating incompressible surface. 
Amongst all incompressible and boundary incompressible surfaces $A_+, A_-$  in $M_+, M_-$, choose the ones which minimize $h=|\chi(A_+)| + |\chi(A_-)| + 2(2n-m-1)$ where $n \ge m$ and say $n = |\partial A_+|, m=|\partial A_-|$ are the numbers of boundary curves. 
Then $h$ gives a lower bound for the absolute value of the Euler characteristic of $S$ and hence gives a convenient bound for the Heegaard genus. 
\end{rem}

\end{document}